\documentclass[11pt]{article}

\evensidemargin0cm \oddsidemargin0cm \textwidth16cm
\textheight23cm \topmargin-1cm

\usepackage{amsmath}
\usepackage{amsthm}
\usepackage{amsfonts}
\usepackage{latexsym}
\usepackage{dsfont}
\usepackage{bbm}
\usepackage{color}

\numberwithin{equation}{section}

\theoremstyle{plain}
\newtheorem{Theorem}{Theorem}[section]

\theoremstyle{remark}
\newtheorem{Rem}[Theorem]{Remark}
\theoremstyle{definition}

\DeclareMathOperator{\1}{\mathbbm{1}}

\newcommand{\mn}{\mathbb{N}}
\newcommand{\mr}{\mathbb{R}}
\newcommand{\me}{\mathbb{E}}
\newcommand{\mmp}{\mathbb{P}}

\newcommand{\lix}{\underset{x\to\infty}{\lim}}
\newcommand{\lin}{\underset{n\to\infty}{\lim}}

\setlength{\topmargin}{-1.5cm} \setlength{\footskip}{1cm}
\setlength{\oddsidemargin}{1cm} \setlength{\textheight}{24cm}
\setlength{\textwidth}{14cm}

\title{Functional limit theorems for divergent perpetuities in the contractive case}
\author{Dariusz Buraczewski\footnote{Mathematical Institute, University of
Wroc{\l}aw, Pl. Grunwaldzki 2/4, 50-384 Wroc{\l}aw, Poland
\newline e-mail: dbura@math.uni.wroc.pl} \ \ and \ \
Alexander Iksanov\footnote{Faculty of Cybernetics, Taras
Shevchenko National University of Kyiv, 01601 Kyiv, Ukraine
\newline e-mail: iksan@univ.kiev.ua}}

\begin{document}

\thispagestyle{empty}
\maketitle

\begin{abstract}
Let $\big(M_k, Q_k\big)_{k\in\mn}$ be independent copies of an
$\mr^2$-valued random vector. It is known that if
$Y_n:=Q_1+M_1Q_2+\ldots+M_1\cdot\ldots\cdot M_{n-1}Q_n$ converges
a.s.\ to a random variable $Y$, then the law of $Y$ satisfies the
stochastic fixed-point equation $Y \overset{d}{=} Q_1+M_1Y$, where
$(Q_1, M_1)$ is independent of $Y$. In the present paper we
consider the situation when $|Y_n|$ diverges to $\infty$ in
probability because $|Q_1|$ takes large values with high
probability, whereas the multiplicative random walk with steps
$M_k$'s tends to zero a.s. Under a regular variation assumption we
show that $\log |Y_n|$, properly scaled and normalized, converge
weakly in the Skorokhod space equipped with the $J_1$-topology to
an extremal process. A similar result also holds for the
corresponding Markov chains. Proofs rely upon a deterministic
result which establishes the $J_1$-convergence of certain sums to
a maximal function and subsequent use of the Skorokhod
representation theorem.

\vspace{0,1cm}

\noindent \emph{Keywords:} extremal process; functional limit
theorem; perpetuity; random difference equation

\noindent
2000 Mathematics Subject Classification: Primary: 60F17 \\
\hphantom{2000 Mathematics Subject Classification: }Secondary:
60G50
\end{abstract}

\section{Introduction} \label{sec:Intro_and_main_results}

Let $\big(M_k, Q_k\big)_{k\in\mn}$ be independent copies of a
random vector $\big(M, Q\big)$ with arbitrary dependence of the
components, and let $X_0$ be a random variable which is
independent of $\big(M_k,Q_k\big)_{k\in\mn}$. Then the sequence
$\big(X_n\big)_{n\in\mn_0}$ defined by
\begin{equation}
\label{markov chain}
X_n=M_nX_{n-1}+Q_n, \quad
n\in\mn,
\end{equation}
is a homogeneous Markov chain. In view of the representation
\begin{eqnarray*}
X_n&=&\Psi_n(X_{n-1})=\Psi_n\circ\ldots\circ\Psi_1(X_0)\\&=&
Q_n+M_nQ_{n-1}+\ldots+M_nM_{n-1}\cdot\ldots\cdot M_2 Q_1+
M_nM_{n-1}\cdot\ldots\cdot M_1X_0
\end{eqnarray*}
for $n\in\mn$, where $\Psi_n(t):=Q_n+M_n t$ for $n\in\mn$,
$(X_n)_{n\in\mn}$ is nothing else but the {\it forward} iterated
function system. Closely related is the {\it backward} iterated
function system
$$Y_n:=\Psi_1\circ\ldots\circ\Psi_n(0)=Q_1+M_1Q_2+\ldots+M_1M_2\cdot\ldots M_{n-1}Q_n, \ \ n\in\mn.$$ In the case that $X_0=0$ a.s.\ it is
easily seen that $X_n$ has the same law as $Y_n$ for each fixed
$n$.

Put
$$\Pi_0:=1, \ \ \Pi_n:=M_1M_2\cdot\ldots\cdot M_n, \ \ n\in\mn$$
and assume that
\begin{equation}\label{trivial}
\mmp\{M=0\}=0\quad \text{and}\quad\mmp\{Q=0\}<1
\end{equation}
and
\begin{equation}\label{trivial2}
\mmp\{Q+Mr=r\}<1\quad\text{for all}\quad r\in\mr.
\end{equation}
Then according to Theorem 2.1 in \cite{Goldie+Maller:2000} the
series $\sum_{k\geq 1}\Pi_{k-1}Q_k$ is absolutely a.s.\ convergent
provided that
\begin{equation}\label{33}
\lin \Pi_n=0 \ \ \text{a.s. and} \ \ I:=\int_{(1,\infty)}{\log
x\over A(\log x)}\mmp\{|Q|\in {\rm d}x\}<\infty,
\end{equation}
where $A(x):=\me (\log^-|M|\wedge x)$, $x>0$. The sum $Y$, say, of
the series is then called {\it perpetuity}.

It is also well-known what happens in the 'trivial cases' when at
least one of conditions \eqref{trivial} and \eqref{trivial2} does
not hold.

\noindent (a) If $\mmp\{M=0\}>0$, then $\tau:=\inf\{k\in\mn:
M_k=0\}<\infty$ a.s., and the perpetuity trivially converges, the
limit being an a.s.\ finite random variable $\sum_{k=1}^\tau
\Pi_{k-1}Q_k$. Plainly, its law is a unique invariant measure for
$(X_n)$.

\noindent (b) If $\mmp\{Q=0\}=1$, then $\sum_{k\geq
1}\Pi_{k-1}Q_k=0$ a.s.

\noindent (c) If $\mmp\{Q+Mr=r\}=1$ for some $r\in\mr$, then
either $\delta_r$ is a unique invariant probability measure for
$(X_n)$ or every probability law is an invariant measure, or every
symmetric around $r$ probability law is an invariant measure (see
Theorem 3.1 in \cite{Goldie+Maller:2000} for the details).

Under assumptions \eqref{trivial}, \eqref{trivial2} and \eqref{33}
the Markov chain $\big(X_n\big)$ has a unique invariant
probability measure which is the law of the perpetuity.
Equivalently, the law of $Y$ is a unique solution to the
stochastic fixed-point equation
\begin{equation}
\label{random difference equation}
 Y \overset{d}{=} Q+MY,
\end{equation}
where the vector $(M, Q)$ is assumed independent of $Y$, sometimes
called the {\it random difference equation}. Equations
\eqref{random difference equation} appear in diverse areas of both
applied and pure mathematics and various properties of $Y$ have
attracted considerable attention. Papers
\cite{Alsmeyer+Iksanov+Roesler:2009,Goldie+Maller:2000,Vervaat:1979}
give pointers to relevant literature.

For $(X_n)$ defined by \eqref{markov chain} we write $X_n^v$ to
indicate that $X_0 = v$ for $v\in\mr$. If the first part of
\eqref{33} is in force we infer $|X_n^v-X_n^w| = \Pi_n|v-w|\to 0$
a.s.\ as $n\to\infty$, for any $v,w\in \mr$. Therefore, the case
when $\lin \Pi_n=0$ a.s.\ will be called {\it contractive}.

In the present paper we are interested in the case when conditions
\eqref{trivial}, \eqref{trivial2} and
\begin{equation}\label{30} \lin \Pi_n=0\quad\text{a.s. and}\quad I=\infty
\end{equation}
hold, i.e., the model is still contracting, yet the second
condition in \eqref{33} is violated. By Theorem 2.1 in
\cite{Goldie+Maller:2000} $(Y_n)$ is then a {\it divergent
perpetuity} in the sense that
$|Y_n|=|\sum_{k=1}^n\Pi_{k-1}Q_k|\overset{{\rm P}}{\to}\infty$ as
$n\to\infty$. The purpose of the present paper is to prove
functional limit theorems for the Markov chains $(X_n)$ and for
the divergent perpetuities $(Y_n)$ under the aforementioned
assumptions.

As far as we know Grincevi\v{c}ius \cite{Grincevicius:1975} was
the first to prove a limit theorem for $Y_n$ in the case $\me \log
|M|=0$ under the assumption that $M>0$ a.s. Also, weak convergence
of one-dimensional distributions of divergent perpetuities has
been investigated in \cite{Basu+Roitershtein:2013,
Hitczenko+Wesolowski:2011, Pakes:1983, Rachev+Samorodnitsky:1995}
under various assumptions on $M$ and $Q$. To the best of our
knowledge, (a) functional limit theorems for divergent
perpetuities have not been obtained so far; (b) \cite{Pakes:1983}
is the only contribution to case \eqref{30} which deals with
one-dimensional convergence. We would like to stress that outside
the area of limit theorems we are only aware of two papers
\cite{Kellerer:1992} and \cite{Zeevi+Glynn:2004} which investigate
case \eqref{30}. Unlike \eqref{30} the critical non-contractive
case $\me \log |M|=0$ has received more attention in the
literature, see \cite{Babillot+Bougerol+Elie:1997, Brofferio:2003,
Brofferio+Buraczewski:2014+, Buraczewski:2007, Grincevicius:1975,
Hitczenko+Wesolowski:2011, Rachev+Samorodnitsky:1995}.

Assuming that the tail of $\log^-|M|$ is lighter than that of
$\log^+|Q|$ we state two functional limit theorems thereby
covering a variety of situations. In particular, we do not require
finiteness of $\me \log |M|$. Under \eqref{30} the complementary
case is also possible where the tail of $\log^-|M|$ is not lighter
than that of $\log^+ |Q|$. Take, for instance,
$\mmp\{\log^-|M|>x\}\sim x^{-\alpha}\log x$, $x\to\infty$, and
$\mmp\{\log |Q|\in {\rm d}x\}=\alpha
x^{-\alpha-1}\1_{(1,\infty)}{\rm d}x$ for some $\alpha\in (0,1)$.
Even though this situation is beyond the scope of the present work
we note without going into details that it is unlikely that there
is functional convergence in the Skorokhod space equipped with one
of the standard topologies like $J_1$ or $M_1$. Also, it is worth
to stress that unlike some previous papers on limit theorems for
perpetuities we allow $M$ and $Q$ to take values of both signs.

For $c>0$ and $\alpha>0$, let $N^{(c,\alpha)}:=\sum_k
\varepsilon_{(t_k^{(c,\alpha)},\,j_k^{(c,\alpha)})}$ be a Poisson
random measure on $[0,\infty)\times (0,\infty]$ with mean measure
$\mathbb{LEB}\times \mu_{c,\alpha}$, where $\varepsilon_{(t,\,x)}$
is the probability measure concentrated at $(t,x)\subset
[0,\infty)\times (0,\infty]$, $\mathbb{LEB}$ is the Lebesgue
measure on $[0,\infty)$, and $\mu_{c,\alpha}$ is a measure on
$(0,\infty]$ defined by
$$\mu_{c,\alpha}\big((x,\infty]\big)=cx^{-\alpha}, \ \ x>0.$$
Let $D:=D[0,\infty)$ denote the Skorokhod space of
right-continuous functions defined on $[0,\infty)$ with finite
limits from the left at positive points. Throughout the paper we
use '$\Rightarrow$' to denote weak convergence in the Skorokhod
space $D$ equipped with the $J_1$-topology. We write
'$\Rightarrow$ in $S$' to denote weak convergence in a space $S$
other than $D$. Also, we stipulate hereafter that the supremum
over the empty set is equal to zero.

Theorem \ref{main1} treats the situation in which both $M_k$'s and
$Q_k$'s affect the limit behavior of the processes in question,
whereas in the situation of Theorem \ref{main2} only the
contribution of $Q_k$'s persists in the limit.
\begin{Theorem}\label{main1} Assume that
\begin{equation}\label{mean}
\me \log |M|=-a\in (-\infty, 0),
\end{equation}
that
\begin{equation}\label{2}
\lix x\mmp\{\log |Q|>x\}=c
\end{equation}
for some $c>0$. If
\begin{equation}\label{nondeg}
\mmp\{Y_k=0\}=0
\end{equation}
for each $k\in\mn$, then
\begin{equation}\label{31}
{\log \big|Y_{[n\cdot]+1}\big|\over an} \ \Rightarrow \
\underset{t_k^{(c/a,1)}\leq
\cdot}{\sup}\big(-t_k^{(c/a,1)}+j_k^{(c/a,1)}\big),\quad
n\to\infty,
\end{equation}
and if
\begin{equation}\label{nondeg2}
\mmp\{X_k=0\}=0
\end{equation}
for each $k\in\mn$, then
\begin{equation}\label{32}
{\log \big|X_{[n\cdot]+1}\big|\over an} \ \Rightarrow \
g(\cdot)+\underset{t_k^{(c/a,1)}\leq
\cdot}{\sup}\big(t_k^{(c/a,1)}+j_k^{(c/a,1)}\big), \ \ n\to\infty,
\end{equation}
where $g(t):=-t$, $t\geq 0$.
\end{Theorem}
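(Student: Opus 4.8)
The plan is to reduce the whole statement to a single deterministic convergence result (the "maximal function" lemma advertised in the abstract) applied along almost every sample path, and then transfer the convergence to weak convergence via the Skorokhod representation theorem. First I would set up the point-process input: by the classical result on sums of i.i.d. random variables with regularly varying tails of index $1$, the assumption \eqref{2} implies that the planar point process $\sum_{k\ge 1}\varepsilon_{(k/n,\,\log^+|Q_k|/n)}$ converges weakly to the Poisson random measure $N^{(c,1)}$ on $[0,\infty)\times(0,\infty]$. Simultaneously, the strong law of large numbers gives $\Pi_n = \prod_{j\le n}M_j$ with $\tfrac1n\log|\Pi_n|\to -a$ a.s., so that the deterministic ramp $g(t)=-t$ emerges from the multiplicative walk after the time-space scaling by $n$. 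Using the Skorokhod representation theorem, I would realize all these objects on one probability space so that the point-process convergence holds a.s.\ and $\tfrac1n\log|\Pi_{[nt]}| \to -at$ locally uniformly in $t$, a.s.

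Next comes the deterministic core. Write $\log|Y_{[n\cdot]+1}| = \log\big|\sum_{k=1}^{[n\cdot]+1}\Pi_{k-1}Q_k\big|$ and observe that on the log-scale each summand $\Pi_{k-1}Q_k$ contributes roughly $\log|\Pi_{k-1}| + \log|Q_k| \approx -a(k/n)\,n + \log|Q_k|$. After dividing by $an$, the dominant contribution at continuous time $t$ is $\sup_{k:\,t_k\le t}\big(-t_k + j_k\big)$ where $(t_k,j_k)$ range over the atoms of $N^{(c/a,1)}$ — note the index changes from $c$ to $c/a$ because of the extra division by $a$ inside the normalization. The key analytic statement to prove is: if $a_n^{(k)}$ are real arrays such that $\sum_k \varepsilon_{(k/n,\,a_n^{(k)})}$ converges (in the vague topology on $[0,\infty)\times(0,\infty]$) to a point measure $\sum_k \varepsilon_{(t_k,\,j_k)}$ with the $t_k$'s distinct and $\sum_k j_k\1_{\{j_k\le\delta\}}$ controllable, and if a second array $b_n^{(k)}$ satisfies $b_n^{([nt])}\to -at$ locally uniformly, then $\tfrac{1}{an}\log\big|\sum_{k\le[n\cdot]+1} \exp(nb_n^{(k)})\,\xi_k\big| \Rightarrow \sup_{t_k\le\cdot}(-t_k+j_k)$ in $(D,J_1)$, for any signs $\xi_k\in\{\pm1\}$. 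The proof of this is the standard "largest-jump-dominates" argument: the sum is, up to a factor that is $e^{o(n)}$, equal to the largest term $\exp\big(n\cdot\max_{k\le[nt]+1}(b_n^{(k)} + \text{(truncated $j_n^{(k)}$)})\big)$, because the truncated-small-jumps part contributes a geometric-type series that is negligible after taking logarithms and dividing by $n$, while cancellation among the finitely many large terms cannot occur with probability tending to one thanks to the nondegeneracy hypotheses \eqref{nondeg}, \eqref{nondeg2} (which guarantee the partial sums do not vanish, so no exact cancellation of the top order happens at the discrete level). The $J_1$-convergence, rather than merely finite-dimensional convergence, follows because the limiting supremum process is a pure-jump step function whose jump times are exactly the $t_k$'s, and the running maximum of the prelimit is eventually attained at the scaled atom locations $k/n \to t_k$; one checks the standard $J_1$ criterion (convergence of jump times and jump sizes, plus flatness in between) on the Skorokhod-representation space.

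Finally, for \eqref{32} the argument is essentially identical with one change of sign. Using the forward representation $X_n = Q_n + M_nQ_{n-1} + \dots + M_n\cdots M_1 X_0$, the $k$-th summand (counting from the top) carries a multiplicative factor $M_n\cdots M_{n-k+1}$, whose log behaves like $-a\cdot(\text{number of factors})$; relative to absolute time $t=n/n$ the atom sitting at scaled position $t_k$ is multiplied by $\exp(n\cdot(-a)(t - t_k))$, which after normalization contributes $-(t - t_k) + j_k = -t + (t_k + j_k)$, producing the ramp $g(t)=-t$ outside the supremum and $\sup_{t_k\le t}(t_k+j_k)$ inside; the contribution of the initial condition $X_0$ is $\exp(n b_n^{([nt])})|X_0| = e^{-ant + o(n)}|X_0|$, which is of the same order $e^{-ant}$ as the ramp and hence asymptotically negligible against the supremum term on any compact time interval bounded away from the first atom, and the nondegeneracy \eqref{nondeg2} again rules out exact cancellation. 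I expect the main obstacle to be the rigorous control of the $J_1$-topology near $t=0$ and at the (finitely many, random) atom times — in particular showing that no spurious oscillation in the prelimit process survives between consecutive atoms and that cancellations among summands of comparable magnitude are asymptotically impossible — whereas the probabilistic inputs (point-process convergence of regularly-varying $\log^+|Q|$ and the SLLN for $\log|\Pi_n|$) and the passage from a.s.\ convergence back to weak convergence are routine.
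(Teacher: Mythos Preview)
Your overall strategy is the paper's: SLLN for $(an)^{-1}\log|\Pi_{[n\cdot]}|$, point-process convergence of the rescaled $\log^+|Q_k|$'s to $N^{(c/a,1)}$, joint convergence upgraded to a.s.\ convergence via Skorokhod representation, and then a deterministic $J_1$-lemma (the paper's Theorem~\ref{aux}) asserting that $c_n^{-1}\log^+\big|\sum_k \pm e^{c_n(f_n(\tau_k^{(n)})+y_k^{(n)})}\big|$ converges to the maximal function $\sup_{\tau_k\le\cdot}(f_0(\tau_k)+y_k)$.

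There is, however, one genuine gap. You attribute the absence of cancellation among the finitely many large terms to the nondegeneracy hypotheses \eqref{nondeg} and \eqref{nondeg2}. That is not their role: those conditions only ensure that $\log|Y_k|$ (resp.\ $\log|X_k|$) is finite, so that the prelimit paths lie in $D$; they say nothing about \emph{near}-cancellation as $n\to\infty$. What actually rules out cancellation is a property of the \emph{limit}. The paper verifies (its condition (A3)) that the values $-t_k^{(c/a,1)}+j_k^{(c/a,1)}$ are a.s.\ pairwise distinct, by writing the Poisson measure restricted to $[0,T]\times(\delta,\infty]$ as $\sum_{k=1}^N\varepsilon_{(U_k,V_k)}$ with i.i.d.\ $(U_k,V_k)$ and observing that $-U+V$ has a continuous law, so $\mmp\{-U_1+V_1=-U_2+V_2\}=0$. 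Once the limiting exponents are distinct, the prelimit exponents are eventually distinct as well, and then $\pm a_{1,n}^{c_n}\pm\dots\pm a_{p,n}^{c_n}$ is dominated by its largest term for \emph{any} arrangement of signs. Your ``largest-jump-dominates'' step does not go through without this observation, and $Y_k\neq 0$ for fixed $k$ cannot substitute for it. Two smaller points: the paper treats $\log^-|Y_{[n\cdot]+1}|/(an)\Rightarrow 0$ separately, via $|Y_n|\overset{\rm P}{\to}\infty$ from Goldie--Maller, since the deterministic lemma naturally outputs $\log^+$; and for \eqref{32} the paper does not rerun the forward argument you sketch but factors $X_{[nt]+1}=\Pi_{[nt]+1}\sum_{k\le [nt]}\Pi_k^{-1}(Q_{k+1}/M_{k+1})$ and applies the already-proved case to the dual pair $(M^{-1},Q/M)$, whose $\log|Q^\ast|$ has the same tail as $\log|Q|$, picking up the external factor $\Pi_{[nt]+1}$ as the ramp $g$.
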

\begin{Rem}
Conditions \eqref{nondeg} and \eqref{nondeg2} ensure that the
paths of $\log \big|Y_{[n\cdot]+1}|$ and $\log
\big|X_{[n\cdot]+1}|$ belong to $D$. While a simple sufficient
condition for \eqref{nondeg} to hold is continuity of the law of
$Q$, \eqref{nondeg2} holds if either $X_0=0$ a.s. and the law of
$Q$ is continuous or the law of $X_0$ is continuous. Condition
\eqref{nondeg} (\eqref{nondeg2}) is not needed if (a) we replace
$\log$ with $\log^+$ in \eqref{31} (\eqref{32}); (b) consider weak
convergence in $D(0,\infty)$ rather than $D$. The same remark also
concerns Theorem \ref{main2} given below.
\end{Rem}
\begin{Rem}\label{remark1}
Since $X_n\overset{d}{=} Y_n$ for each $n\in\mn$ provided that
$X_0=0$ a.s., the one-dimensional distributions of the limit
processes in \eqref{31} and \eqref{32} must coincide. Moreover,
they can be explicitly computed and are given by
\begin{multline}\label{38}
\mmp\Big\{\underset{t_k^{(c/a,1)}\leq
u}{\sup}\big(-t_k^{(c/a,1)}+j_k^{(c/a,1)}\big)\leq x\Big\} \\
 = \mmp\big\{-u+\underset{t_k^{(c/a,1)}\leq
u}{\sup}(t_k^{(c/a,1)}+j_k^{(c/a,1)})\leq x\big\}=\bigg({x\over
x+u}\bigg)^{c/a}
\end{multline}
for $x\geq 0$ and $u>0$.

Indeed, for $x\geq 0$, the probability on the left-hand side
equals
$$\mmp\big\{N^{(c/a,1)}\big((t,y): t\leq u,
-t+y>x\big)=0\big\}=\exp\big(-\me N^{(c/a,1)}\big((t,y): t\leq u,
-t+y>x\big)\big)$$ because $N^{(c/a,1)}\big((t,y): t\leq u,
-t+y>x\big)$ is a Poisson random variable. It remains to note that
\begin{eqnarray*}
\me N^{(c/a,1)}\big((t,y): t\leq u,
-t+y>x\big)&=&\int_0^u\int_{[0,\infty)}\1_{\{-t+y>x\}}\mu_{c/a,\,1}({\rm
d}y){\rm d}t\\&=&(c/a)\int_0^u(x+t)^{-1}{\rm
d}t\\&=&(c/a)(\log(x+u)-\log x).
\end{eqnarray*}
\end{Rem}
\begin{Rem}
Theorem 5(ii) in \cite{Pakes:1983} states that, for fixed $a>0$,
\begin{equation}\label{36}
\lin \mmp\bigg\{\log\bigg(\sum_{k=0}^n e^{-ak}|Q_{k+1}|\bigg)\leq
anx\bigg\}=\bigg({x\over x+1}\bigg)^{c/a}, \ \ x\geq 0
\end{equation}
provided that
\begin{equation*}
\lix x\big(1-\me \exp(-e^{-x}|Q|)\big)=c\in (0,\infty).
\end{equation*}
By an Abelian-Tauberian argument the last relation is equivalent
to \eqref{2}. This implies that convergence \eqref{36} follows
from \eqref{31} and \eqref{38}.
\end{Rem}
\begin{Theorem}\label{main2}
Suppose that $\mmp\{M=0\}=0$, $\lin \Pi_n=0$ a.s., and that
\begin{equation}\label{20}
\mmp\{\log |Q|>x\}\sim x^{-\alpha}\ell(x), \ \ x\to\infty
\end{equation}
for some $\alpha\in (0,1]$ and some $\ell$ slowly varying at
$\infty$. Let $(b_n)$ be a positive sequence which satisfy $\lin
n\mmp\{\log |Q|>b_n\}=1$. In the case $\alpha=1$ assume
additionally\footnote{Among other things this implies
$\me\log^+|Q|=\infty$.} that $\lix \ell(x)=+\infty$. In the case
$\me \log^- |M|=\infty$ assume that
\begin{equation}\label{18}
\lix {\me \big(\log^-|M|\wedge x\big)\over x\mmp\{\log|Q|>x\}}=0.
\end{equation}
If condition \eqref{nondeg} holds, then
\begin{equation}\label{35}
{\log \big|Y_{[n\cdot]+1}\big| \over b_n} \ \Rightarrow \
\underset{t_k^{(1,\,\alpha)}\leq
\cdot}{\sup}\,j_k^{(1,\,\alpha)},\quad n\to\infty,
\end{equation}
and if condition \eqref{nondeg2} holds, then
\begin{equation}\label{355}
{\log \big|X_{[n\cdot]+1}\big|\over b_n} \ \Rightarrow \
\underset{t_k^{(1,\,\alpha)}\leq
\cdot}{\sup}\,j_k^{(1,\,\alpha)},\quad n\to\infty.
\end{equation}
\end{Theorem}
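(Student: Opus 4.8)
The plan is to show that, on the scale $b_n$, $\log|Y_{[n\cdot]+1}|$ is governed by the single largest summand of $Y_{[n\cdot]+1}$, and that this largest summand is in turn governed by the partial maxima of $\log^+|Q_k|$, the factors $\Pi_{k-1}$ being asymptotically irrelevant because $b_n/n\to\infty$. Set $S_0:=0$ and $S_k:=\sum_{j=1}^k\log|M_j|$, so that $\log|\Pi_{k-1}Q_k|=S_{k-1}+\log|Q_k|$ and $|Y_m|\le\sum_{k=1}^m e^{S_{k-1}}|Q_k|$. I would first prove the statement with $\log^+$ in place of $\log$ — which, as noted in the Remark, needs neither \eqref{nondeg} nor \eqref{nondeg2} — and only afterwards use these nondegeneracy conditions to pass to $\log$.

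Two probabilistic inputs are needed. First, the i.i.d.\ sequence $\big(\log^+|Q_k|\big)$ has a regularly varying tail of index $-\alpha$ by \eqref{20}, and $b_n$ is chosen so that $n\,\mmp\{\log^+|Q|>b_n\}\to1$; hence the classical functional extreme-value theorem gives $\max_{1\le k\le[n\cdot]+1}\log^+|Q_k|/b_n\Rightarrow M(\cdot):=\sup_{t_k^{(1,\alpha)}\le\cdot}j_k^{(1,\alpha)}$, equivalently the point processes $\sum_k\varepsilon_{(k/n,\,\log^+|Q_k|/b_n)}$ converge weakly to $N^{(1,\alpha)}$. Second, the multiplicative walk is negligible on the scale $b_n$: since $\lin\Pi_n=0$ a.s.\ we have $S^\ast:=\sup_{k\ge0}S_k<\infty$ a.s., so $\sup_k S_k^+/b_n\to0$ a.s.; and, because $-S_k\le\sum_{j\le k}\log^-|M_j|$, it suffices to show $b_n^{-1}\sum_{j=1}^{[nT]}\log^-|M_j|\to0$ in probability for each $T$. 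When $\me\log^-|M|<\infty$ this is immediate from $b_n/n\to\infty$ — which holds in every case covered by the theorem (for $\alpha<1$ by regular variation, for $\alpha=1$ since $\lix\ell(x)=+\infty$); when $\me\log^-|M|=\infty$ it follows from \eqref{18} by truncating at level $b_n$, as \eqref{18} yields both $\me(\log^-|M|\wedge b_n)=o(b_n/n)$ and $\mmp\{\log^-|M|>b_n\}=o\big(\mmp\{\log|Q|>b_n\}\big)=o(1/n)$. Altogether $\sup_{k\le[nT]}|S_k|/b_n\to0$ in probability.

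By the Skorokhod representation theorem one may realize the above convergences almost surely on one probability space. The technical backbone is then a deterministic $J_1$-convergence lemma: given that the point masses of an array $\sum_k\varepsilon_{(k/n,\,x_k^{(n)}/b_n)}$ converge to a point measure whose atoms have pairwise distinct second coordinates, and that the sub-threshold part of $\sum_{k\le[n\cdot]}e^{x_k^{(n)}}$ is of order $e^{o(b_n)}$, it concludes that $b_n^{-1}\log\big(\sum_{k\le[n\cdot]}e^{x_k^{(n)}}\big)$ converges in $J_1$ to the running maximum of the limiting atoms. I would apply this with $x_k^{(n)}:=S_{k-1}+\log|Q_k|$ (i.e.\ the array $e^{S_{k-1}}|Q_k|$): the shifted point masses still converge to $N^{(1,\alpha)}$ because $\sup_{k\le[nT]}|S_{k-1}|/b_n\to0$, the second coordinates of $N^{(1,\alpha)}$ are a.s.\ pairwise distinct since $\mu_{1,\alpha}$ is non-atomic, and for any $\varepsilon_n\downarrow0$ the part of the sum with $|Q_k|\le e^{\varepsilon_n b_n}$ is at most $\big([n\cdot]+1\big)e^{S^\ast}e^{\varepsilon_n b_n}$, whose $b_n^{-1}\log$ tends to $0$ uniformly on compacts (using $b_n/\log n\to\infty$). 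This yields $b_n^{-1}\log\big(\sum_{k=1}^{[n\cdot]+1}e^{S_{k-1}}|Q_k|\big)\Rightarrow M(\cdot)$.

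It remains to pass from this majorant to $\log|Y_{[n\cdot]+1}|$ and then to the Markov chain. The bound $|Y_m|\le\sum_{k\le m}e^{S_{k-1}}|Q_k|$ gives the upper estimate. For the lower estimate I would invoke the \emph{single large term} principle on the logarithmic scale: on the a.s.\ space the finitely many summands in $[0,Tn]$ of modulus at least $e^{\varepsilon b_n}$ correspond to distinct atoms of $N^{(1,\alpha)}$, so when $M(t)>0$ the largest of them — whose $b_n^{-1}\log$-modulus tends to $M(t)$ — exceeds, on the $b_n$-scale, the sum of all remaining summands, whence $b_n^{-1}\log|Y_{[nt]+1}|\to M(t)$; when $M(t)=0$ both quantities are $e^{o(b_n)}$. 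The genuinely delicate point — and where \eqref{nondeg}, resp.\ \eqref{nondeg2}, is actually used, beyond guaranteeing that the paths lie in $D$ — is to rule out deep downward excursions $|Y_{[n\cdot]+1}|<e^{-\eta b_n}$ (resp.\ for $X$), which would destroy $J_1$-tightness; I would handle this via a Lévy-concentration-function estimate for the laws of $Y_m$, uniform in $m$ through $Y_m=Q_1+M_1\widetilde Y_{m-1}$ with $\widetilde Y_{m-1}\overset{d}{=}Y_{m-1}$ independent of $(Q_1,M_1)$, combined once more with $b_n/n\to\infty$. Finally, for $X_n$ one repeats the whole scheme using $X_n=\sum_{k=1}^n\Pi_{k:n}Q_k+\Pi_{0:n}X_0$ with $\Pi_{k:n}:=M_{k+1}\cdots M_n$: the term $\Pi_{0:n}X_0$ has $b_n^{-1}\log$-modulus $\to0$, one has $\log|\Pi_{k:n}Q_k|=(S_n-S_k)+\log|Q_k|$ with $\sup_{k\le n\le[nT]}|S_n-S_k|/b_n\to0$ by the second input, and — which is what keeps $\log|X_m|$ from dropping after a large $Q_{k^\ast}$ — for $m>k^\ast$ the value $X_m$ is dominated by the product term $\Pi_{k^\ast:m}Q_{k^\ast}$, which the strictly smaller remainder cannot cancel. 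The chief obstacle is thus the uniform control of the downward excursions; the rest is the extreme-value heuristic made rigorous through the deterministic lemma.
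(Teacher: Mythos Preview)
Your outline hews close to the paper's proof: the same two probabilistic inputs (convergence of $\sum_{k\ge 0}\1_{\{\eta_{k+1}>0\}}\varepsilon_{(k/n,\,\eta_{k+1}/b_n)}$ to $N^{(1,\alpha)}$ in $M_p$, and $\sup_{k\le[nT]}|S_k|/b_n\to 0$ in probability, with the same case split on $\me\log^-|M|$ and the same truncation at level $b_n$ under \eqref{18}), then Skorokhod representation and a deterministic $J_1$ lemma. The packaging differs in three places. First, the paper's deterministic tool (Theorem~\ref{aux}) is formulated directly for \emph{signed} sums $c_n^{-1}\log^+\bigl|\sum_k\pm e^{c_n(f_n(\tau_k)+y_k)}\bigr|$, so a single application with $f_0\equiv 0$ already yields $b_n^{-1}\log^+|Y_{[n\cdot]+1}|\Rightarrow M(\cdot)$; your two-stage route (first the positive majorant $\sum_k e^{S_{k-1}}|Q_k|$, then a separate single-large-term argument to reach $|Y_m|$) is essentially a reproof of the signed half of that lemma. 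Second, for the $\log^-$ contribution the paper does not touch concentration functions: it verifies \eqref{trivial}, \eqref{trivial2} and $I=\infty$ (the last via \eqref{18} when $\me\log^-|M|=\infty$), invokes Goldie--Maller to get $|Y_n|\overset{\mathrm P}{\to}\infty$, and from this concludes $b_n^{-1}\log^-|Y_{[n\cdot]+1}|\Rightarrow 0$; your L\'evy-concentration sketch via $Y_m=Q_1+M_1\widetilde Y_{m-1}$ is a heavier device and, as stated, would need the law of $Q$ to be atomless for the bound $\sup_y\mmp\{|Q_1+y|<\varepsilon\}$ to decay fast enough to beat the union over $O(n)$ indices. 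Third, for $X_n$ the paper does not argue directly on $\sum_k\Pi_{k:n}Q_k$: it writes $X_{[n\cdot]+1}=\Pi_{[n\cdot]+1}\sum_{k=0}^{[n\cdot]}\Pi_k^\ast Q_{k+1}^\ast$ with $\Pi_k^\ast=\Pi_k^{-1}$ and $Q_k^\ast=Q_k/M_k$, checks that $\log|Q^\ast|$ inherits the tail \eqref{20}, re-runs the $Y$-argument for the walk $(\Pi_k^\ast)$, and discards the prefactor since $b_n^{-1}\log|\Pi_{[n\cdot]+1}|\to 0$. This sidesteps the running competition between the decay of $\Pi_{k^\ast:m}$ and later large $Q$'s that your direct approach must track.
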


\begin{Rem}
Theorem 5(iii) in \cite{Pakes:1983} states that, for fixed $a>0$,
\begin{equation}\label{39}
\lin \mmp\bigg\{\log\bigg(\sum_{k=0}^n e^{-ak}|Q_{k+1}|\bigg)\leq
b_n x\bigg\}=\exp(-x^{-\alpha}), \ \ x\geq 0
\end{equation}
provided that the function $x\mapsto 1-\me \exp(-e^{-x}|Q|)$ is
regularly varying at $\infty$ with index $-\alpha$, $\alpha\in
(0,1)$, and $(b_n)$ satisfies $n\big(1-\me
\exp(-e^{-b_n}|Q|)\big)=1$. By an Abelian theorem, $$1-\me
\exp(-e^{-x}|Q|) \ \sim \ \mmp\{\log |Q|>x\}, \ \ x\to\infty.$$
Therefore, \eqref{39} follows from \eqref{35} after noting that
\begin{equation}\label{43}
\mmp\big\{\underset{t_k^{(1,\,\alpha)}\leq
u}{\sup}\,j_k^{(1,\,\alpha)}\leq
x\big\}=\mmp\big\{N^{(1,\alpha)}\big((t,y): t\leq u,
y>x\big)=0\big\}=\exp(-ux^{-\alpha}), \ \ x\geq 0
\end{equation}
for each $u>0$.
\end{Rem}

The rest of the paper is structured as follows. In Section
\ref{aux1} we state and prove Theorem \ref{aux}, a deterministic
result which is our key tool for dealing with the functional limit
theorems. With this at hand, Theorem \ref{main1} and Theorem
\ref{main2} are then proved in Section \ref{sect3} and Section
\ref{sect4}, respectively.

\section{Main technical tool}\label{aux1}

Denote by $M_p$ the set of Radon point measures $\nu$ on
$[0,\infty)\times (0,\infty]$ which satisfy
\begin{equation}\label{1}
\nu([0,T]\times [\delta,\infty])<\infty
\end{equation}
for all $\delta>0$ and all $T>0$. The $M_p$ is endowed with the
vague topology. Denote by $M_p^\ast$ the set of $\nu\in M_p$ which
satisfy $$\nu([0,T]\times (0,\infty])<\infty$$ for all $T>0$.
Define the mapping $G$ from $D\times M_p$ to $D$
by\footnote{Assumption \eqref{1} ensures that $G(f,\nu)\in D$. If
\eqref{1} does not hold, $G(f,\nu)$ may lost right-continuity.}
\begin{equation*}
G\left(f,\nu\right)(t):=
\begin{cases}
        \underset{k:\ \tau_k\leq t}{\sup}(f(\tau_k)+y_k),  & \text{if} \ \tau_k\leq t \ \text{for
some} \ k,\\
        f(0), & \text{otherwise},
\end{cases}
\end{equation*}
where $\nu = \sum_k \varepsilon_{(\tau_k,\,y_k)}$. Also, for each
$n\in\mn$, we define the mapping $F_n$ from $D\times M_p^\ast$ to
$D$ by
\begin{equation*}
F_n\left(f,\nu\right)(t):=
\begin{cases}
c_n^{-1}\log^+ \big|\sum_{k:\,\tau_k\leq t}\pm \exp
(c_n(f(\tau_k)+y_k))\big|, & \text{if} \ \tau_k\leq t \ \text{for
some} \ k,\\
        f^+(0), & \text{otherwise},
\end{cases}
\end{equation*}
where the signs $+$ and $-$ are arbitrarily arranged, and $(c_n)$
is some sequence of positive numbers. The definition of $F_n$ in the case of empty sum stems from the fact that we define $\big|\sum_{k:\,\tau_k\leq t}\pm \exp
(c_n(f(\tau_k)+y_k))\big|:=\exp (c_n f(0))$ if there is no $k$ such that $\tau_k\leq t$.

\begin{Theorem}\label{aux}
For $n\in\mn$, let $f_n\in D$ and $\nu_n\in M_p$. Let
$\big(\tau^{(n)}_k, y^{(n)}_k\big)$ be the points of $\nu_n$,
i.e., $\nu_n=\sum_k \varepsilon_{(\tau^{(n)}_k,\,y^{(n)}_k)}$.
Assume that $f_0$ is continuous with $f_0(0)=0$ and
\begin{itemize}
\item[(A1)] $\nu_0(\{0\}\times (0,\infty])=0$ and $\nu_0((r_1,r_2)\times(0,\infty])\geq 1$
for all positive $r_1$ and $r_2$ such that $r_1<r_2$;\\
\item[(A2)] $\nu_0 = \sum_k \varepsilon_{\big(\tau^{(0)}_k,\,y^{(0)}_k\big)}$ does not have clustered jumps, i.e.,
$\tau^{(0)}_k\neq \tau^{(0)}_j$ for $k\neq j$;\\
\item[(A3)] if not
all the signs under the sum defining $F_n$ are the same, then
\begin{equation}
\label{eq:a3-1}
f_0(\tau^{(0)}_i)+y^{(0)}_i\neq f_0(\tau^{(0)}_j)+y^{(0)}_j\ \ \mbox{for
$i\neq j$}
\end{equation}
and
\begin{equation}
\label{eq:a3-2}
\sup_{\tau^{(0)}_k\leq T,\,y^{(0)}_k\leq
\gamma}\big(f_0(\tau^{(0)}_k)+y^{(0)}_k\big)>0
\end{equation}
for each $T>0$ such that
$\nu_0(\{T\}, (0,\infty])=0$ and small enough $\gamma>0$;
\item[(A4)] $\lin c_n=\infty$ and
\begin{equation}\label{sequ}
\lin c_n^{-1}\log \#\{k: \tau_k^{(n)}\leq T\}=0
\end{equation}
for each $T>0$ such that $\nu_0(\{T\}, (0,\infty])=0$;
\item[(A5)] $\lin f_n= f_0$ in $D$ in the $J_1$-topology.  
\item[(A6)] $\lin {\nu_n }=\nu_0$ in  $M_p$.
\end{itemize}
Then
\begin{equation}\label{1.3}
\lin F_n(f_n,\nu_n)= G(f_0,\nu_0)
\end{equation}
in $D$ in the $J_1$-topology.
\end{Theorem}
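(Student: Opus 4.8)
The plan is to reduce the convergence of the nonlinear functionals $F_n(f_n,\nu_n)$ to the convergence of the ``max-type'' functional $G$ by a careful analysis on a fixed time interval $[0,T]$, exploiting the fact that the logarithm of a sum of exponentials with growing rate $c_n$ is dominated by its largest term. First I would fix $T>0$ with $\nu_0(\{T\}\times(0,\infty])=0$ and restrict everything to $D[0,T]$; by standard reductions (since the limit is in $D[0,\infty)$ and can be approximated by its restrictions) it suffices to prove $J_1$-convergence on each such $[0,T]$. On $[0,T]$ assumption (A6) together with (A1)--(A2) means $\nu_n\to\nu_0$ vaguely, so for each $\delta>0$ the points of $\nu_n$ with second coordinate exceeding $\delta$ converge (in number and location) to the corresponding points of $\nu_0$; crucially $\nu_0$ has only finitely many such points on $[0,T]$ and, by (A2), no two share a time coordinate.

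The heart of the argument is the ``only the maximum survives'' estimate. Write $S_n(t):=\sum_{k:\,\tau^{(n)}_k\le t}\pm\exp(c_n(f_n(\tau^{(n)}_k)+y^{(n)}_k))$ and $L_n(t):=\sup_{k:\,\tau^{(n)}_k\le t}(f_n(\tau^{(n)}_k)+y^{(n)}_k)$, so $c_n^{-1}\log^+|S_n(t)|$ is what we must control and $L_n(t)$ converges (on $[0,T]$, at continuity points) to $G(f_0,\nu_0)(t)=\sup_{k:\,\tau^{(0)}_k\le t}(f_0(\tau^{(0)}_k)+y^{(0)}_k)$ by (A5), (A6) and continuity of $f_0$. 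The upper bound $|S_n(t)|\le \#\{k:\tau^{(n)}_k\le t\}\cdot\exp(c_n L_n(t))$ combined with (A4), equation \eqref{sequ}, gives $\limsup_n c_n^{-1}\log^+|S_n(t)|\le (G(f_0,\nu_0)(t))^+$, and one checks the positive part is harmless because of \eqref{eq:a3-2} (which forces the relevant running maximum to be positive past the first point, when signs differ) or because all signs agree (then the sum is positive of size at least the largest term). For the matching lower bound one must show the dominant term is not cancelled: split $S_n(t)$ into the single ``leading'' term (the one realizing $L_n(t)$, which is asymptotically unique by \eqref{eq:a3-1}) and the rest; the rest is, for large $n$, at most $\#\{\cdots\}\exp(c_n L_n'(t))$ where $L_n'(t)$ is the second-largest value, and since the gap $L_n(t)-L_n'(t)$ stays bounded away from $0$ (again by \eqref{eq:a3-1} and the finiteness/separation of limit points) while $c_n\to\infty$, the leading term dominates and $c_n^{-1}\log^+|S_n(t)|\to (G(f_0,\nu_0)(t))^+$ pointwise at continuity points of the limit. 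Finally one upgrades pointwise convergence to $J_1$-convergence on $[0,T]$: both $F_n(f_n,\nu_n)$ and $G(f_0,\nu_0)$ are (up to the $\log^+$) nondecreasing right-continuous step functions with finitely many jumps in the limit, the jump times converge by (A6), and the jump sizes converge by the pointwise analysis just carried out, so one can build explicit time-changes $\lambda_n$ (shifting the finitely many $\tau^{(n)}_k$ with $y^{(n)}_k>\delta$ onto the $\tau^{(0)}_k$) realizing $J_1$-closeness; a further $\delta\downarrow 0$ argument, using (A1) to see the small-$y$ points contribute negligibly in the limit, completes the proof.

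The main obstacle is the lower bound, i.e., ruling out catastrophic cancellation when the signs are mixed: a priori a huge number of terms of size just below the leading one could sum to cancel it. This is exactly what assumptions \eqref{eq:a3-1}--\eqref{sequ} are designed to prevent, and the delicate point is bookkeeping the ``second gap'' $L_n(t)-L_n'(t)$ uniformly in $t$ over $[0,T]$ and along $n$; here one uses that $\nu_0$ has finitely many points above level $\delta$ on $[0,T]$ with pairwise distinct values $f_0(\tau^{(0)}_k)+y^{(0)}_k$ (continuity of $f_0$ plus \eqref{eq:a3-1}), so the gaps are bounded below by a positive constant depending only on $\delta$ and $T$, and (A5)--(A6) transfer this to $\nu_n$ for large $n$. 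A secondary technical nuisance is the behavior at the (finitely many in the limit) jump points versus continuity points, handled by choosing $T$ and the truncation level $\delta$ outside the at-most-countable bad set and invoking the characterization of $J_1$-convergence via convergence at a dense set of points together with convergence of the sup-norm over $[0,T]$.
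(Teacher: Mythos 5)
Your plan is correct and follows essentially the same route as the paper's proof: truncate the point measure at a level $\gamma$ (your $\delta$), match the finitely many large points of $\nu_n$ to those of $\nu_0$ by explicit time changes, use \eqref{eq:a3-1} together with $c_n\to\infty$ to rule out cancellation among the leading terms, and control the remaining small-$y$ points via \eqref{sequ} and \eqref{eq:a3-2} before letting the truncation level tend to zero. The only cosmetic difference is that the paper bounds the small-point contribution additively, via $|\log^+|W_n+V_n|-\log^+|V_n||\le\log(1+|W_n|)$, rather than through a leading-term/second-gap comparison, which cleanly avoids the issue of small points rivalling the leading big term at early times.
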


\begin{proof}
It suffices to prove convergence \eqref{1.3} in $D[0,T]$ for any
$T>0$ such that $\nu_0(\{T\}\times(0,\infty])=0$ because the last
condition ensures that $F(f_0,\nu_0)$ is continuous at $T$.

If all the signs under the sum defining $F_n$ are the same, then
$$G(f_n,\nu_n)(t)\leq F_n(f_n,\nu_n)(t)\leq c_n^{-1}\log^+ \#\{k: \tau_k^{(n)}\leq t\}+G(f_n,\nu_n)(t)$$ for all $t\in [0,T]$.
In this case, \eqref{1.3} is a trivial consequence of Theorem 1.3
in \cite{Iksanov+Pilipenko:2014} which treats the convergence
$\lin G(f_n,\nu_n)= G(f_0,\nu_0)$ in $D$.

In what follows we thus assume that not all the signs are the
same. Let $\rho=\{0=s_0<s_1<\dots <s_m=T\}$ be a partition of
$[0,T]$ such that
\begin{equation*}
\nu_0(\{s_k\}\times (0,\infty])=0, \ \ k=1,...,m.
\end{equation*}
Pick now $\gamma>0$ so small that
\begin{equation}\label{2.1'}
\nu_0((s_k,s_{k+1})\times (\gamma,\infty])\geq 1, \ \ k=0,...,m-1
\end{equation}
and that $\sup_{\tau^{(0)}_k\leq
T,\,y^{(0)}_k>\gamma}(f_0(\tau^{(0)}_k)+y^{(0)}_k)>0$. The latter
is possible because\newline $\sup_{\tau^{(0)}_k\leq
T}(f_0(\tau^{(0)}_k)+y^{(0)}_k)>0$ as a consequence of
\eqref{eq:a3-2}.

Condition (A6) implies that $\nu_0([0,T]\times
(\gamma,\infty])=\nu_n([0,T]\times (\gamma,\infty])=p$ for large
enough $n$ and some $p\geq 1$. Denote by
$(\bar{\tau}_i,\bar{y}_i)_{1\leq i\leq p}$ an enumeration of the
points of $\nu_0$ in $[0,T]\times (\gamma,\infty]$ with
$\bar{\tau}_1<\bar{\tau}_2<\ldots<\bar{\tau}_p$ and by
$(\bar{\tau}_i^{(n)}, \bar{y}_i^{(n)})_{1\leq i\leq p}$ the
analogous enumeration of the points of $\nu_n$ in $[0,T]\times
(\gamma,\infty]$. Then
\begin{equation*}
\lim_{n\to\infty}\sum_{i=1}^p(| \bar{\tau}^{(n)}_i- \bar{\tau}_i
|+|\bar{y}^{(n)}_i-\bar{y}_i|)=0
\end{equation*}
and more importantly
\begin{equation}\label{2.2}
\lim_{n\to\infty}\sum_{i=1}^p(| f_n(\bar{\tau}^{(n)}_i)-
f_0(\bar{\tau}_i)|+|\bar{y}^{(n)}_i-\bar{y}_i|)=0
\end{equation}
because (A5) and the continuity of $f_0$ imply that $\lin f_n=
f_0$ uniformly on $[0,T]$.

Define $\lambda_n$ to be continuous and strictly increasing
functions on $[0,T]$ with $\lambda_n(0) =0$, $\lambda_n(T) =T$,
$\lambda_n(\bar{\tau}^{(n)}_i)=\bar{\tau}_i$ for $i=1,\ldots,p$,
and let $\lambda_n$ be linearly interpolated elsewhere on $[0,T]$.
For $n\in\mn$ and $t\in [0,T]$, set
\begin{equation*}
V_n(t) := \sum_{\bar{\tau}_i=\lambda_n(\bar{\tau}^{(n)}_i)\leq
t}\pm \exp\big(c_n(f_n(\bar{\tau}^{(n)}_i)+\bar{y}^{(n)}_i)\big)
\end{equation*}
and \begin{equation*} W_n(t) := \sum_{\lambda_n(\tau^{(n)}_k)\leq
t}\pm \exp\big(c_n (f_n(\tau^{(n)}_k)+y^{(n)}_k)\big) - V_n(t).
\end{equation*}
With this at hand we have
\begin{eqnarray}\label{principal}
d_T(F_n(f_n,\nu_n), G(f_0,\nu_0)) &\leq&
\sup_{t\in[0,\,T]}|\lambda_n(t)-t|\\&+&c_n^{-1}\sup_{t\in[0,\,T]}\Big|\log^+\big|
W_n(t) + V_n(t) \big| - \log^+\big| V_n(t) \big|\Big|\notag\\&+&
\sup_{t\in[0,\,T]}\Big|c_n^{-1}\log^+\big|V_n(t)\big| -
\sup_{\bar{\tau}_i\leq t}(f_0(\bar{\tau}_i)+\bar{y}_i)
\Big|\notag\\&+& \sup_{t\in[0,T]}\Big|\sup_{\bar{\tau}_i\leq
t}\big(f_0(\bar{\tau}_i)+\bar{y}_i\big)-\sup_{\tau^{(0)}_k\leq
t}\big(f_0(\tau^{(0)}_k)+y^{(0)}_k\big )\Big|,\notag
\end{eqnarray}
where $d_T$ is the standard Skorokhod metric on $D[0,T]$.

We treat the terms on the right-hand side of \eqref{principal}
separately.

\noindent {\sc 1st term}. The relation $\lin
\sup_{t\in[0,\,T]}|\lambda_n(t)-t|=0$ is easily checked.

\noindent {\sc 2nd term}. We denote the second term by $I_n(\gamma)$ and use inequality
$$|\log^+|x|-\log^+|y||\leq \log(1+|x-y|),\quad x,y\in\mr$$ which yields
\begin{eqnarray}
I_n(\gamma)&\leq& c_n^{-1}\sup_{t\in
[0,T]}\log\big(1+\big|W_n(t)\big|\big)\notag\\&\leq& c_n^{-1}
\log\bigg(1+ \sum_{\lambda_n(\tau^{(n)}_k)\leq
T,\,\tau^{(n)}_k\neq \bar{\tau}^{(n)}_i} \exp\big(c_n
(f_n(\tau^{(n)}_k)+y^{(n)}_k)\big)\bigg)\notag\\&\leq& c_n^{-1}\log
\bigg(1+\#\big\{k: \tau^{(n)}_k\leq T,\,\tau^{(n)}_k\neq
\bar{\tau}^{(n)}_i
\big\}\notag\\&\times&\sup_{\tau^{(n)}_k\leq
T,\,\tau^{(n)}_k\neq \bar{\tau}^{(n)}_i}\exp\big(c_n
(f_n(\tau^{(n)}_k)+y^{(n)}_k)\big)\bigg)\notag\\&\leq& c_n^{-1}\log
\#\big\{k: \tau^{(n)}_k \leq T\big\}+ \sup_{\tau^{(n)}_k\leq
T,\,\tau^{(n)}_k\neq
\bar{\tau}^{(n)}_i}\big(f_n(\tau^{(n)}_k)+y^{(n)}_k\big)\notag\\&+&
\bigg(c_n \#\big\{k: \tau^{(n)}_k\leq T,\,\tau^{(n)}_k\neq
\bar{\tau}^{(n)}_i\big\}\bigg)^{-1}\notag
\\&\times&\exp\bigg(-c_n\sup_{\tau^{(n)}_k\leq
T,\,\tau^{(n)}_k\neq \bar{\tau}^{(n)}_i}
\big(f_n(\tau^{(n)}_k)+y^{(n)}_k\big)\bigg)\label{tech}
\end{eqnarray}
having utilized $\log (1+x)\leq \log x+1/x$, $x>0$ and that
$\lambda_n(\tau^{(n)}_k)\leq T$ iff $\tau^{(n)}_k \leq T$. The
first term on the right-hand side of \eqref{tech} converges to
zero in view of \eqref{sequ}. As to the second, we apply Theorem
1.3 in \cite{Iksanov+Pilipenko:2014} to infer
\begin{eqnarray}\label{princ}
\sup_{\tau^{(n)}_k\leq T,\,\tau^{(n)}_k\neq
\bar{\tau}^{(n)}_i}\big(f_n(\tau^{(n)}_k)+y^{(n)}_k\big)
&=&\sup_{\tau^{(n)}_k\leq T,\,y^{(n)}_k\leq \gamma}(f_n(\tau^{(n)}_k)+y^{(n)}_k)\notag\\
&\to& \sup_{\tau^{(0)}_k\leq T,\,y^{(0)}_k\leq
\gamma}\big(f_0(\tau^{(0)}_k)+y^{(0)}_k\big),
\end{eqnarray}
as $n\to\infty$. The latter goes to zero as $\gamma\to 0$ because
$f_0=0$ by assumption. Finally, the last term on the right-hand
side of \eqref{tech} tends to zero as $n\to\infty$ for the
principal factor of exponential growth does so as a consequence of
\eqref{princ} and the assumption $\sup_{\tau^{(0)}_k\leq
T,\,y^{(0)}_k\leq \gamma}(f_0(\tau^{(0)}_k)+y^{(0)}_k)>0$.
Summarizing we have proved that $\underset{\gamma\to
0}{\lim}\underset{n\to\infty}{\lim\sup}\,I_n(\gamma)=0$.

\noindent {\sc 3rd term}. Denote the third term of
\eqref{principal} by $J_n$. We shall use the inequality
$$J_n\leq \sup_{t\in [0,\,T]}A_n(t)+c_n^{-1}\sup_{t\in[0,\,T]}\log^-| V_n(t)|,$$ where
$A_n(t):=\Big|c_n^{-1}\log| V_n(t)|-\sup_{\bar{\tau}_i\leq
t}(f_0(\bar{\tau}_i)+\bar{y}_i) \Big|$, $t\in [0,T]$.

If $t\in [0,\bar{\tau}_1)$, then $A_n(t)=|f_n(0)-f_0(0)|\to 0$ as
$n\to\infty$ by the definition of the functionals. Let now $t\in
[\bar{\tau}_k,\bar{\tau}_{k+1})$, $k=1,\ldots,p-1$ or $t\in
[\bar{\tau}_p, T]$. Since all
$\exp(f_0(\bar{\tau}_1)+\bar{y}_1),\ldots,
\exp(f_0(\bar{\tau}_k)+\bar{y}_k)$ are distinct by \eqref{eq:a3-1}
and $$\lin \exp(f_n(\bar{\tau}^{(n)}_j)+\bar{y}^{(n)}_j)
=\exp(f_0(\bar{\tau}_j)+\bar{y}_j), \quad j=1,\ldots,k$$ by
\eqref{2.2}, we conclude that
$\exp(f_n(\bar{\tau}_1^{(n)})+\bar{y}^{(n)}_1),\ldots,
\exp(f_n(\bar{\tau}_k^{(n)})+\bar{y}_k^{(n)})$ are all distinct,
for large enough $n$. Denote by $a_{k,n} < \ldots < a_{1,n}$ their
increasing rearrangement\footnote{Although $a_{j,n}$'s depend on
$t$ we suppress this dependence for the sake of clarity.} and put
\begin{equation*}
B_n(t):=c_n^{-1}\log \bigg|1\pm \bigg({a_{2,n}\over
a_{1,n}}\bigg)^{c_n}\pm\ldots\pm \bigg({a_{k,n}\over
a_{1,n}}\bigg)^{c_n}\bigg|.
\end{equation*}
Since $\lin \bigg( \pm \big({a_{2,n}\over
a_{1,n}}\big)^{c_n}\pm\ldots\pm \big({a_{k,n}\over
a_{1,n}}\big)^{c_n}\bigg)=0$, there is an $N_k$ such that
$$
|B_n(t)| \le c_n^{-1} \qquad \mbox{ for } n\ge N_k.
$$
Summarizing we have
\begin{equation}\label{imp}
\sup_{t\in [0,\,T]}|B_n(t)|\leq c_n^{-1}\quad\text{for all} \
n\geq \max (N_1,\ldots, N_p).
\end{equation}
With these at hand we can proceed as follows
\begin{eqnarray*}
A_n(t)&=&\Big|\sup_{\bar{\tau}_i\leq
t}\big(f_n(\bar{\tau}^{(n)}_i)+\bar{y}^{(n)}_i\big)+B_n(t)-\sup_{\bar{\tau}_i\leq
t}\big(f_0(\bar{\tau}_i)+\bar{y}_i\big)\Big|\\
&\leq& \Big|\sup_{\bar{\tau}_i\leq
t}\big(f_n(\bar{\tau}^{(n)}_i)+\bar{y}^{(n)}_i\big)-\sup_{\bar{\tau}_i\leq
t}\big(f_0(\bar{\tau}_i)+\bar{y}_i\big)|+|B_n(t)\Big|\\
&\leq& \sum_{i=1}^p\Big(\big|
f_n(\bar{\tau}^{(n)}_i)-
f_0(\bar{\tau}_i)\big|+\big|\bar{y}^{(n)}_i-\bar{y}_i\big|\Big)+|B_n(t)|.
\end{eqnarray*}
In view of \eqref{2.2} and \eqref{imp} the right-hand side tends
to zero uniformly in $t\in [0,T]$ as $n\to\infty$.

We already know that $$\lin \sup_{t\in[0,\,T]} c_n^{-1}\log\big|
V_n(t)\big| =\sup_{\bar{\tau}_i\leq
T}(f_0(\bar{\tau}_i)+\bar{y}_i).$$ Recalling that
$$\sup_{\bar{\tau}_i\leq
T}\big(f_0(\bar{\tau}_i)+\bar{y}_i\big)=\sup_{\tau^{(0)}_k\leq
T,\,y^{(0)}_k>\gamma}\big(f_0(\tau^{(0)}_k)+y^{(0)}_k\big)>0$$ we
infer $\lin \sup_{t\in[0,\,T]}\big| V_n(t)\big|=+\infty$ and
thereupon $\sup_{t\in[0,\,T]}\log^-\big| V_n(t)\big|=0$ for large
enough $n$. Hence $\lin J_n=0$.

\noindent {\sc 4th term}. In the proof of Theorem 1.3 in
\cite{Iksanov+Pilipenko:2014} it is shown that\footnote{Condition
\eqref{2.1'} is only used in this part of the proof.}
$$\sup_{t\in[0,T]}|\sup_{\bar{\tau}_i\leq
t}(f_0(\bar{\tau}_i)+\bar{y}_i)-\sup_{\tau^{(0)}_k\leq
t}(f_0(\tau^{(0)}_k)+y^{(0)}_k)|\leq
\omega_{f_0}(2|\rho|)+\gamma,$$ where
$|\rho|:=\max_i(s_{i+1}-s_i)$ and
$\omega_{f_0}(\varepsilon):=\underset{|u-v|<\varepsilon,\,u,v\geq
0}{\sup}\,|f_0(u)-f_0(v)|$ is the modulus of continuity of $f_0$.
Of course, the right-hand side of the last inequality tends to
zero on sending $|\rho|$ and $\gamma$ to zero.

Collecting pieces together and letting in \eqref{principal}
$n\to\infty$ and then $|\rho|$ and $\gamma$ tend to zero we arrive
at the desired conclusion $$\lin d_T(F_n(f_n,\nu_n),
G(f_0,\nu_0))=0.$$
\end{proof}

\section{Proof of Theorem \ref{main1}}\label{sect3}

\noindent {\sc Proof of \eqref{31}}. We first show that
\begin{equation}\label{negative}
{\log^-|Y_{[n\cdot]+1}|\over an} \ \Rightarrow \ h(\cdot),
\end{equation}
where $h(t)=0$, $t\geq 0$. To this end, we intend to check that
conditions \eqref{trivial}, \eqref{trivial2} and \eqref{30} hold.
If they do, then, as $n\to\infty$, $|Y_n|\overset{{\rm P}}{\to}
\infty$ by Theorem 2.1 in \cite{Goldie+Maller:2000} and thereupon
$\sup_{t\in [0,\,T]}|Y_{[nt]+1}|=\sup_{1\leq k\leq
[nT]+1}|Y_k|\overset{{\rm P}}{\to}\infty$ for each $T>0$. This
entails $\sup_{t\in [0,\,T]}\log^-|Y_{[nt]+1}|=0$ for each $T>0$
and large enough $n$ which proves \eqref{negative}. Assumption
\eqref{mean} entails $\lin \Pi_n=0$ a.s. and $\mmp\{M=0\}=0$.
Condition $\mmp\{Q=0\}=0$ is a part of \eqref{nondeg}. Suppose
$Q+Mr=r$ a.s.\ for some $r\in\mr$. In view of $\mmp\{Q=0\}=0$ we
have $r\neq 0$ and then $|Q|/|r|=|1-M|\leq 1+|M|$ a.s. Since $\me
\log (1+|M|)<\infty$ by \eqref{mean} we must have
$\me\log^+|Q|<\infty$. This contradiction completes the proof of
\eqref{negative}.

For $k\in\mn_0$, set $S_k:=\log |\Pi_k|$ and $\eta_{k+1}:=\log
|Q_{k+1}|$. As a consequence of the strong law of large numbers,
\begin{equation}\label{6}
{S_{[n\cdot]}\over an} \Rightarrow g(\cdot), \ \ n\to\infty,
\end{equation}
where $g(t):=-t$, $t\geq 0$ (actually, in \eqref{6} the a.s.\
convergence holds, see Theorem 4 in \cite{Glynn+Whitt:1988}).
According to Corollary 4.19 (ii) in \cite{Resnick:1987} condition
\eqref{2} entails
\begin{equation}\label{7}
\sum_{k\geq 0}\1_{\{\eta_{k+1}>0\}}\varepsilon_{(n^{-1}k,\,
(an)^{-1}\eta_{k+1})} \ \Rightarrow \ N^{(c/a,1)}, \ \ n\to\infty
\end{equation}
in $M_p$, see Section \ref{aux1} for the definition of $M_p$. Now
relations \eqref{6} and \eqref{7} can be combined into the joint
convergence
\begin{equation*}
\bigg((an)^{-1}S_{[n\cdot]},\sum_{k\geq
0}\1_{\{\eta_{k+1}>0\}}\varepsilon_{(n^{-1}k,\,
(an)^{-1}\eta_{k+1})}\bigg) \ \Rightarrow \ \big(g(\cdot),
N^{(c/a,1)}\big) \ \text{as} \ \ n\to\infty
\end{equation*}
in $D[0,\infty)\times M_p$. By the Skorokhod representation
theorem there are versions which converge a.s. Retaining the
original notation for these versions we want to apply Theorem
\ref{aux} with $f_n(\cdot)=(an)^{-1}S_{[n\cdot]}$, $f_0=g$,
$\nu_n=\sum_{k\geq 0} \1_{\{\eta_{k+1}>0\}}
\varepsilon_{\{n^{-1}k,\, (an)^{-1}\eta_{k+1}\}}$, $\nu_0=
N^{(c/a,1)}$, $c_n=an$ and the signs $\pm$ defined by ${\rm
sgn}(\Pi_k Q_{k+1})$ to conclude
$${\log^+|Y_{[n\cdot]+1}|\over an} \ \Rightarrow \
\underset{t_k^{(c/a,1)}\leq
\cdot}{\sup}\big(-t_k^{(c/a,1)}+j_k^{(c/a,1)}\big).$$ Of course,
this together with \eqref{negative} proves \eqref{31}.

Thus it remains to check that all the assumptions of Theorem
\ref{aux} hold. We already know that conditions  (A5) and (A6) are
fulfilled. Condition \eqref{sequ} holds trivially. Further
$N^{(c/a,1)}([0,T]\times [\delta,\infty])<\infty$ a.s.\ for all
$\delta>0$ and all $T>0$ because $ \mu_{c/a,1}([\delta,\infty])
<\infty$. Plainly, $N^{(c/a,1)}(\{0\}\times (0,+\infty])=0$ a.s.,
and $N^{(c/a,1)}((r_1,r_2)\times(0,\infty])\geq 1$ a.s.\ whenever
$0<r_1<r_2$ because  $\mu_{c/a,1}((0,\infty])=\infty$. This gives
(A1).

Next we check \eqref{eq:a3-1}. Our argument is similar to that
given on p.~223 in \cite{Resnick:2007}. We fix any $T>0$,
$\delta>0$ and use the representation
$$N^{(c/a,1)}([0,T]\times (\delta,\infty]\cap \cdot)=
\sum_{k=1}^N\varepsilon_{(U_k,V_k)}(\cdot),$$ where $(U_i)$ are
i.i.d. with the uniform $[0,T]$ distribution, $(V_j)$ are iid with
$\mmp\{V_1\leq x\}=(1-\delta/x)\1_{(\delta,\infty)}(x)$, and $N$
has the Poisson distribution with parameter $Tc/(a\delta)$, all
the random variables being independent. It suffices to prove that
$$I:=\mmp\{N\geq 2, -U_k+V_k=-U_i+V_i\quad\text{for some} \ 1\leq k<j\leq
N\}=0.$$ This is a consequence of the fact that $-U_1+V_1$ has a
continuous distribution which implies
$\mmp\{-U_1+V_1=-U_2+V_2\}=0$. Indeed,
\begin{eqnarray*}
I&=&\sum_{n\geq 2}\mmp\{-U_k+V_k=-U_i+V_i\quad\text{for some} \
1\leq k<j\leq n\}\mmp\{N=n\}\\&=& \sum_{n\geq 2}{n\choose
2}\mmp\{-U_1+V_1=-U_2+V_2\}\mmp\{N=n\}=0
\end{eqnarray*}
An analogous working leads to the conclusion that $N^{(c/a,1)}$
does not have clustered jumps a.s., i.e., (A2) holds. The last
thing that needs to be checked is condition \eqref{eq:a3-2}.
Arguing as in Remark \ref{remark1} we infer
\begin{eqnarray*}
&&\mmp\bigg\{\underset{t_k^{(c/a,1)}\leq T,\,j_k^{(c/a,\,1)}\leq
\gamma}{\sup}(-t_k^{(c/a,1)}+j_k^{(c/a,1)})\leq
0\bigg\}\\&=&\exp\bigg(-\me N^{(c/a,1)}\big((t,y): t\leq T, y\leq
\gamma,
y>t\big)\bigg)\\&=&\exp\bigg(-(c/a)\int_0^\gamma(t^{-1}-\gamma^{-1}){\rm
d}t\bigg)=0
\end{eqnarray*}
for any $T>0$ and any $\gamma\in (0,T)$.

\noindent {\sc Proof of \eqref{32}}. Without loss of generality we
assume that $X_0=0$ a.s. and use the representation
\begin{equation}\label{40}
X_{[n\cdot]+1}=\Pi_{[n\cdot]+1}\sum_{k=0}^{[n\cdot]}\Pi^\ast_kQ^\ast_{k+1},
\end{equation}
where $\Pi_k^\ast:=\Pi_k^{-1}$, $k\in\mn_0$ and
$Q^\ast_k:=Q_k/M_k$ (with generic copy $Q^\ast$), $k\in\mn$.

Observe that
$${\underset{0\leq t\leq T}{\sup}\,|S_{[nt]+1}-S_{[nt]}|\over n}={\underset{1\leq k\leq [nT]+1}{\max}\,\big|\log |M_k|\big|\over n}
\ \overset{P}{\to} \ 0, \ \ n \to\infty$$ for every $T>0$, because
$\lix x\mmp\big\{\big|\log |M| \big|>x\big\}=0$ as a consequence
of $\me |\log |M||<\infty$. This together with \eqref{6} proves
\begin{equation}\label{66}
{\log |\Pi_{[n\cdot]+1}|\over an} \Rightarrow g(\cdot),\quad
n\to\infty,
\end{equation}
where $g(t)=-t$, $t\geq 0$. Further, write, for $\varepsilon\in
(0,1)$ and $x>0$,
\begin{eqnarray}
\mmp\{\log |Q|>(1+\varepsilon)x\}-\mmp\{\log |M|>\varepsilon
x\}&\leq& \mmp\{\log |Q|-\log |M|>x\}\notag\\&\leq& \mmp\{\log
|Q|>(1-\varepsilon)x\}\notag\\&+&\mmp\{\log^-|M|>\varepsilon
x\}.\label{41}
\end{eqnarray}
Multiplying the inequality by $x$, sending $x\to\infty$ and then
$\varepsilon\to 0$ yields
$$\mmp\{\log |Q^\ast|>x\}=\mmp\{\log |Q|-\log
|M|>x\} \ \sim \ \mmp\{\log |Q|>x\} \ \sim \ cx^{-1}, \ \
x\to\infty.$$

Set $M^\ast:=1/M$. Conditions \eqref{trivial} and \eqref{trivial2}
with $(M,Q)$ replaced by $(M^\ast, Q^\ast)$ are easily checked.
Also, we have $\lin \Pi^\ast_n=\infty$ a.s. Hence $|\sum_{k=1}^n
\Pi^\ast_{k-1}Q^\ast_k|\overset{{\rm P}}{\to}\infty$ as
$n\to\infty$ by Theorem 2.1 in \cite{Goldie+Maller:2000}. Arguing
in the same way as in the proof of \eqref{31} we see that
$${\log^-|\sum_{k=0}^{[n\cdot]}\Pi^\ast_kQ^\ast_{k+1}|\over an} \ \Rightarrow \
h(\cdot),\quad n\to\infty.$$ An application of Theorem \ref{aux}
gives\footnote{We omit details which are very similar to but
simpler than those appearing in the proof of \eqref{31}.}
$${\log^+|\sum_{k=0}^{[n\cdot]}\Pi^\ast_kQ^\ast_{k+1}|\over an} \
\Rightarrow \ \underset{t_k^{(c/a,\,1)}\leq
\cdot}{\sup}\big(t_k^{(c/a,1)}+j_k^{(c/a,1)}\big),\quad
n\to\infty.$$ Now \eqref{32} follows by a combination of the last
two relations and \eqref{66}.

\section{Proof of Theorem \ref{main2}}\label{sect4}

The proof proceeds along the lines of that of Theorem \ref{main1}
but is simpler for the contribution of $M_k$'s is negligible.
Therefore we only provide details for fragments which differ
principally from the corresponding ones in the proof of Theorem
\ref{main1}.

Observe that
\begin{equation}\label{19}
\lin {b_n\over n}=+\infty.
\end{equation}
Indeed, since $(b_n)$ is a regularly varying sequence of index
$1/\alpha$, this is trivial when $\alpha\in (0,1)$. If $\alpha=1$,
this follows from the relation $b_n/n\sim \ell(b_n)$ as
$n\to\infty$ and our assumption that $\lix \ell(x)=\infty$.

\noindent {\sc Proof of \eqref{35}}. As far as
\begin{equation}\label{negative2}
{\log^-|Y_{[n\cdot]+1}|\over b_n} \ \Rightarrow \ h(\cdot),\quad
n\to\infty
\end{equation}
is concerned which is the counterpart of \eqref{negative} we have
to check two things that are not obvious in the case when $\me
\log^- |M|=\infty$: condition \eqref{trivial2} and
$I=\int_{(1,\infty)}{\log x\over A(\log x)}\mmp\{|Q|\in {\rm
d}x\}=\infty$.

Assume first that $\mmp\{Q+Mr=r\}=1$ for some $r\neq 0$. In view
of $|Q-r|=|M||r|$, the tails of $\log^+ |Q|$ and $\log^+ |M|$ must
exhibit the same asymptotics. However, this is not a case, for the
tail of $\log^+ |Q|$ is heavier than that of $\log^+ |M|$.

Next, according to \eqref{18}, for any $B>0$ there exists $x_0>0$
such that $${\log x\over A(\log x)}\geq {B\over \mmp\{|Q|>x\}}$$
whenever $x\geq x_0$. Hence,
$$I\geq B\int_{[x_0,\infty)}{\mmp\{|Q|\in {\rm d}x\}\over
\mmp\{|Q|>x\}}=\infty.$$ Thus, \eqref{negative2} holds.

To proceed we recall the already used notation $S_k:=\log |\Pi_k|$
and $\eta_{k+1}:=\log |Q_{k+1}|$, $k\in\mn_0$. According to
Corollary 4.19 (ii) in \cite{Resnick:1987} condition \eqref{20}
entails
\begin{equation}\label{70}
\sum_{k\geq 0}\1_{\{\eta_{k+1}>0\}}\varepsilon_{(n^{-1}k,\,
b_n^{-1}\eta_{k+1})} \ \Rightarrow \ N^{(1,\alpha)},  \ \
n\to\infty
\end{equation}
in $M_p$. If we can prove that
\begin{equation}\label{22}
{S_{[n\cdot]}\over b_n} \ \Rightarrow \ h(\cdot),\quad n\to\infty,
\end{equation}
where $h(t)=0$, $t\geq 0$, then relations \eqref{70} and
\eqref{22} can be combined into the joint convergence
\begin{equation*}
\bigg(b_n^{-1} S_{[n\cdot]},\sum_{k\geq
0}\1_{\{\eta_{k+1}>0\}}\varepsilon_{(n^{-1}k,\,
b_n^{-1}\eta_{k+1})}\bigg) \ \Rightarrow \ \big(h(\cdot),
N^{(1,\alpha)}\big), \ \ n\to\infty
\end{equation*}
in $D\times M_p$. By the Skorokhod representation theorem there
are versions which converge a.s. Retaining the original notation
for these versions we apply Proposition \ref{aux} with
$f_n(\cdot)=b_n^{-1}S_{[n\cdot]}$, $f_0=h$, $\nu_n=\sum_{k\geq 0}
\1_{\{\eta_{k+1}>0\}} \varepsilon_{\{n^{-1}k,\,
b_n^{-1}\eta_{k+1}\}}$, $\nu_0= N^{(1,\alpha)}$, $c_n=b_n$ and the
signs $\pm$ defined by ${\rm sgn}(\Pi_kQ_{k+1})$ which gives
\eqref{35} with $\log$ replaced with $\log^+$. The latter in
combination with \eqref{negative2} proves \eqref{35}.

It only remains to check \eqref{22}. To this end, it suffices to
prove that
\begin{equation}\label{34}{\underset{0\leq t\leq
T}{\sup}\,|S_{[nt]}|\over b_n}={\underset{0\leq k\leq
[nT]}{\max}\,|S_k|\over b_n} \ \overset{P}{\to} \ 0, \ \
n\to\infty
\end{equation}
for every $T>0$. Set
$$S_0^+=S_0^-:=0, \ S_n^+:=\log^+ |M_1|+\ldots+\log^+ |M_n|, \ S_n^-:=\log^-|M_1|+\ldots+\log^-|M_n|$$
for $n\in\mn$. Since $(b_n)$ is a regularly varying sequence and
$$\underset{0\leq k\leq [nT]}{\max}\,|S_k|\leq \underset{0\leq
k\leq [nT]}{\max}\,S^+_k+ \underset{0\leq k\leq
[nT]}{\max}\,S^-_k= S^+_{[nT]}+ S^-_{[nT]},$$ \eqref{34} follows
if we prove that $\lin (S^{\pm}_n/b_n)=0$ in probability. While
doing so, we treat two cases separately.

\noindent {\sc Case when $\me \log^- |M|<\infty$.} Then
necessarily $\me \log^+ |M|<\infty$ for otherwise $\lin
\Pi_n=\infty$ a.s. Therefore we have $\lin n^{-1}S^{\pm}_n=\me
\log^{\pm} |M|$ by the strong law of large numbers. Invoking
\eqref{19} proves \eqref{34}.

\noindent {\sc Case when $\me \log^- |M|=\infty$.} Condition
\eqref{18} entails $\lin {n\over b_n}\me \big((\log^-|M|)\wedge
b_n\big)=0$. Since
$${n\over b_n}\me \big((\log^-|M|)\wedge b_n\big)=n\mmp\{\log^-|M|>b_n\}+{n\over b_n}\me \log^-|M| \1_{\{\log^-|M|\leq b_n\}},$$ we infer
\begin{equation}\label{23}
\lin n\mmp\{\log^-|M|>b_n\}=0
\end{equation}
and
\begin{equation}\label{24}
\lin {n\over b_n}\me \big( \log^-|M| \1_{\{\log^-|M|\leq b_n\}}\big)=0.
\end{equation}
Using \eqref{24} together with Markov's inequality proves
$$\lin {\sum_{k=1}^n \log^-|M_k|\1_{\{\log^-|M_k|\leq b_n\}} \over b_n}=0 \ \ \text{in
probability}.$$ Since
\begin{eqnarray*}
\mmp\bigg\{b_n^{-1}\sum_{k=1}^n \log^-|M_k|\neq
b_n^{-1}\sum_{k=1}^n \log^-|M_k|\1_{\{\log^-|M_k|\leq
b_n\}}\bigg\}&\leq& \sum_{k=1}^n
\mmp\{\log^-|M_k|>b_n\}\\&=&n\mmp\{\log^-|M|>b_n\},
\end{eqnarray*}
\eqref{23} implies that the left-hand side tends to zero as
$n\to\infty$. Therefore $\lin (S^-_n/b_n)=0$ in probability.

Left
with proving that $\lin (S^+_n/b_n)=0$ in probability we suppose
immediately that $\me\log^+ |M|=\infty$ for the complementary case
can be treated in exactly the same way as above (use the strong
law of large numbers). Since $\lin S_n=-\infty$ a.s.\ by the
assumption, Lemma 8.1 in \cite{Pruitt:1981} tells us that $\lin
S_n^+/S_n^-=0$ a.s. which together with $\lin (S^-_n/b_n)=0$ in
probability implies $\lin (S^+_n/b_n)=0$ in probability. The proof
of \eqref{22} is complete. Hence so is that of \eqref{35}.

\noindent {\sc Proof of \eqref{355}} follows the pattern of that
of \eqref{32} but is simpler. Referring to \eqref{32} the only
things that need to be checked are that
\begin{equation*}
{\log |\Pi_{[n\cdot]+1}|\over b_n} \Rightarrow h(\cdot),\quad
n\to\infty,
\end{equation*}
where $h(t)=0$, $t\geq 0$, and that
$$\mmp\{\log |Q|-\log |M|>x\} \ \sim \ \mmp\{\log |Q|>x\} \ \sim \ x^{-\alpha}\ell(x), \ \
x\to\infty.$$ To prove the first of these, write
$${\underset{0\leq t\leq T}{\sup}\,|S_{[nt]+1}-S_{[nt]}|\over
b_n}\leq {\underset{0\leq t\leq T}{\sup}\,|S_{[nt]+1}|\over
b_n}+{\underset{0\leq t\leq T}{\sup}\,|S_{[nt]}|\over b_n}$$ and
use \eqref{34} to infer
$${\underset{0\leq t\leq T}{\sup}\,|S_{[nt]+1}-S_{[nt]}|\over
b_n} \ \overset{P}{\to} \ 0, \ \ n \to\infty$$ for every $T>0$. To
check the second we shall use \eqref{41}.

\noindent {\sc Case $\me\log^-|M|<\infty$.} We have $\lix
x\mmp\{\log^-|M|>\varepsilon x\}=0$ whereas $\lix x\mmp\{\log
|Q|>x\}=\infty$ (recall that in the case $\alpha=1$ we assume that
$\lix \ell(x)=\infty$). Therefore,
\begin{equation}\label{42}
\lix {\mmp\{\log^-|M|>\varepsilon x\}\over \mmp\{\log |Q|>x\}} =0.
\end{equation}
Since $\me\log^-|M|<\infty$ entails $\me\log^+|M|<\infty$, the
same argument proves \eqref{42} for the tail of $\log^+|M|$.

\noindent {\sc Case $\me\log^-|M|=\infty$ and
$\me\log^+|M|<\infty$}. It suffices to check \eqref{42} which is a
consequence \eqref{18}.

\noindent {\sc Case $\me\log^-|M|=\me\log^+|M|=\infty$}. We only
have to prove that $$\lix {\mmp\{\log^+|M|>\varepsilon x\}\over
\mmp\{\log |Q|>x\}}=0.$$ Since $\lin S_n=-\infty$ a.s.\ by the
assumption, we have $$\me {\log^+|M|\over A(\log^+|M|)}<\infty$$
(see Proposition 2.6 in \cite{Goldie+Maller:2000}). Therefore
$$\lix {x\mmp\{\log^+|M|>x\}\over \me (\log^-|M|\wedge x)}=0$$ and the
desired relation follows by an application of \eqref{18}.
\bigskip

\footnotesize \noindent   {\bf Acknowledgements}  \quad A.I.
thanks Alexander Marynych and Andrey Pilipenko for useful
discussions. D.B. was partially supported by the NCN grant
DEC-2012/05/B/ST1/00692.


\begin{thebibliography}{99}
\footnotesize

\bibitem{Alsmeyer+Iksanov+Roesler:2009} {\sc Alsmeyer, G., Iksanov, A. and R\"{o}sler,
U.} (2009). On distributional properties of perpetuities. {\em J.
Theor. Probab.} {\bf 22}, 666--682.

\bibitem{Babillot+Bougerol+Elie:1997} {\sc Babillot, M., Bougerol, Ph. and Elie,
L.} (1997). The random difference equation $X_n=A_nX_{n-1}+B_n$ in
the critical case. {\em Ann. Probab.} {\bf 25}, 478--493.

\bibitem{Basu+Roitershtein:2013} {\sc Basu, R. and Roiterstein,
A.} (2013). Divergent perpetuities modulated by regime switches.
{\em Stoch. Models.} {\bf 29}, 129--148.

\bibitem{Brofferio:2003} {\sc Brofferio, S.} (2003). How a centered random walk on the affine group goes to
infinity. {\em Ann. I. H. Poincar\'{e}}. {\bf 39}, 371–-384.

\bibitem{Brofferio+Buraczewski:2014+} {\sc Brofferio, S. and Buraczewski D.} (2014+).
On unbounded invariant measures of stochastic dynamical systems.
{\em Ann. Probab.}, to appear.


\bibitem{Buraczewski:2007} {\sc Buraczewski, D.} (2007). On invariant measures of stochastic recursions
in a critical case. {\em Ann. Appl. Probab.} {\bf 17}, 1245--1272.


\bibitem{Glynn+Whitt:1988} {\sc Glynn, P.~W. and Whitt, W.} (1988). Ordinary CLT and WLLN versions of $L=\lambda W$.
{\em Math. Oper. Res.} {\bf 13}, 674--692.

\bibitem{Goldie+Maller:2000} {\sc Goldie, C.~M. and Maller, R.~A.}
(2000). Stability of perpetuities. {\em Ann. Probab.} {\bf 28},
1195--1218.

\bibitem{Grincevicius:1975} {\sc Grincevicius, A.~K.} (1975).
Limit theorems for products of random linear transformations on
the line. {\em Lithuanian Math. J.} {\bf 15}, 568--579.

\bibitem{Hitczenko+Wesolowski:2011}{\sc Hitczenko, P. and Weso{\l}owski, J.} (2011). Renorming divergent
perpetuities. {\em Bernoulli}. {\bf 17}, 880--894.

\bibitem{Iksanov+Pilipenko:2014} {\sc Iksanov, A. and Pilipenko,
A.} (2014). On the maximum of a perturbed random walk. {\em Stat.
Probab. Letters}. {\bf 92}, 168--172.

\bibitem{Kellerer:1992} {\sc Kellerer, H.~ G.} (1992). Ergodic behaviour of affine recursions
I: criteria for recurrence and transience. Technical report,
University of Munich, Germany. Available at
http://www.mathematik.uni-muenchen.de/$\sim$ kellerer/

\bibitem{Pakes:1983} {\sc Pakes, A.~G.} (1983). Some properties of a random linear difference
equation. {\em Austral. J. Statist.} {\bf 25}, 345--357.

\bibitem{Pruitt:1981} {\sc Pruitt, W.~E.} (1981). General
one-sided laws of the iterated logarithm. {\em Ann. Probab.} {\bf
9}, 1--48.

\bibitem{Rachev+Samorodnitsky:1995} {\sc Rachev, S.~T. and Samorodnitsky, G.} (1995). Limit laws for a stochastic
process and random recursion arising in probabilistic modelling.
{\em Adv. Appl. Probab.} {\bf 27}, 185--202.

\bibitem{Resnick:1987} {\sc Resnick, S.} (1987). {\it Extreme values, regular variation, and point processes}. New York: Springer-Verlag.

\bibitem{Resnick:2007} {\sc Resnick, S.~I.} (2007). {\it Heavy-tail phenomena: Probabilistic and statistical
modeling}. New York: Springer.

\bibitem{Vervaat:1979}{\sc Vervaat, W.} (1979). On a stochastic difference equation and
a representation of non-negative infinitely divisible random
variables. {\em Adv. Appl. Probab.} {\bf 11}, 750--783.

\bibitem{Zeevi+Glynn:2004} {\sc Zeevi, A. and Glynn, P.~W.} (2004). Recurrence properties of autoregressive processes
with super-heavy-tailed innovations. {\em J. Appl. Probab.} {\bf
41}, 639--653.



\end{thebibliography}
\end{document}